\documentclass[11pt]{amsart}

\usepackage[T1]{fontenc}
\usepackage{lmodern}
\usepackage[protrusion=true,expansion=false]{microtype}
\usepackage[margin=1in]{geometry}
\usepackage{booktabs}
\usepackage{float}
\usepackage[hidelinks]{hyperref}
\hypersetup{
  pdftitle={The Asayama--Matsumoto conjecture and a refined discrepancy bound},
  pdfauthor={Alessio Basti and Tommaso Cremaschi},
  pdfsubject={Polychromatic 2-colorings of plane triangulations},
  pdfkeywords={plane triangulation, polychromatic coloring, discrepancy, graph coloring}
}

\newtheorem{theorem}{Theorem}[section]

\newcommand{\disc}{\operatorname{disc}}
\newcommand{\ceil}[1]{\left\lceil #1\right\rceil}
\newcommand{\floor}[1]{\left\lfloor #1\right\rfloor}

\title[The Asayama--Matsumoto conjecture]
{The Asayama--Matsumoto conjecture\\
and a refined discrepancy bound}

\author{Alessio Basti}
\address{Department of Engineering and Geology, ``G. d'Annunzio'' University of Chieti--Pescara, Italy}

\author{Tommaso Cremaschi}
\address{School of Mathematics, Trinity College Dublin, Ireland}

\date{August 21, 2026}
\subjclass[2020]{05C10, 05C15, 05C85}
\keywords{plane triangulation, polychromatic coloring, discrepancy, graph coloring}

\begin{document}
\raggedbottom
\emergencystretch=1em

\begin{abstract}
Every $n$-vertex plane triangulation admits a polychromatic red--blue
vertex coloring with discrepancy at most
$n-2\ceil{n/3}\le\floor{n/3}$, resolving a conjecture of Asayama and
Matsumoto. The proof follows by combining \cite{LoyolaEtAl2026,KawarabayashiEtAl2026}. For $n\ge6$ and $n\not\equiv5\pmod6$, we prove the sharper
bound $n-2\ceil{(n+2)/3}$.  An exhaustive census of all $9{,}150$
non-isomorphic sphere triangulations with $4\le n\le12$ verifies the bounds
and also confirms the sharper estimate at $n=11$, the first order in the
remaining open residue class.
\end{abstract}

\maketitle

\section{Introduction}

A \emph{plane triangulation} is a simple planar graph embedded so that every
face, including the outer face, is a triangle. A red--blue vertex coloring is
\emph{polychromatic} if every facial triangle contains both colors. If the
two color classes are \(R\) and \(B\), their discrepancy is
\(\bigl||R|-|B|\bigr|\); we denote by \(\disc(T)\) the minimum discrepancy
over all polychromatic colorings of \(T\).

Asayama and Matsumoto~\cite{AsayamaMatsumoto2022} proved that every
\(n\)-vertex triangulation admits a polychromatic coloring with discrepancy
at most \((5n-16)/9\). They also constructed an infinite family with
discrepancy at least \(n/3-2\), showing that the leading constant in any
universal upper bound cannot be smaller than \(1/3\). This led them to
conjecture that
\begin{equation}\label{eq:AM-conjecture}
  \disc(T)\le \frac{n}{3}.
\end{equation}
Arevalo Loyola, Biniaz, Bose, and Shermer~\cite{LoyolaEtAl2026} 
improved the general upper bound to \((3n-16)/7\).

Our starting point is the balanced four-color theorem of Kawarabayashi,
Yoneda, and Yoneda~\cite{KawarabayashiEtAl2026}: every planar graph has a
proper $4$-coloring in which no color class contains half of the vertices.
A facial triangle uses three distinct proper colors.  Therefore, after
partitioning the four proper colors into two pairs, every face meets both
pairs.  Choosing the pairing carefully yields
\begin{equation}\label{eq:baseline-intro}
  \disc(T)\le B(n):=n-2\ceil{\frac n3}
  \le\floor{\frac n3},
\end{equation}
which proves~\eqref{eq:AM-conjecture}. We then sharpen the integer analysis.  For every $n\ge6$ with
$n\not\equiv5\pmod6$, we prove
\begin{equation}\label{eq:refined-intro}
  \disc(T)\le U(n):=n-2\ceil{\frac{n+2}{3}}.
\end{equation}
The new expression improves~\eqref{eq:baseline-intro} by two when
$n\equiv0$ or $2\pmod3$, and coincides with it when $n\equiv1\pmod3$.
No claim beyond the universal bound is made for $n\equiv5\pmod6$; whether
\eqref{eq:refined-intro} also holds in that residue class remains open.

Finally, we test the estimates by exact computation.  We enumerate every
non-isomorphic sphere triangulation with $4\le n\le12$ and compute its exact
minimum discrepancy by checking all red--blue colorings up to global color
exchange.  This includes all $1{,}249$ triangulations on $11$ vertices, the
first order in the open residue class.  Apart from the balanced four-color
theorem, the proofs below are elementary and are given in full.

\section{The bounds}\label{sec:universal}
In this section we prove our two bounds.

\begin{theorem}[Asayama--Matsumoto conjecture]\label{thm:universal}
Every plane triangulation $T$ on $n\ge3$ vertices satisfies
\begin{equation}\label{eq:universal}
  \disc(T)\le n-2\ceil{\frac n3}\le\floor{\frac n3}.
\end{equation}
Moreover, a coloring whose discrepancy satisfies the first bound can be found in
$O(n\log n)$ time.
\end{theorem}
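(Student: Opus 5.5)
We will derive the statement from the balanced four-color theorem of Kawarabayashi, Yoneda, and Yoneda~\cite{KawarabayashiEtAl2026}. Apply it to $T$ to get a proper $4$-coloring with color classes $C_1,C_2,C_3,C_4$, each of size $<n/2$. Order them so that $c_1\ge c_2\ge c_3\ge c_4$, where $c_i=|C_i|$, and $\sum c_i=n$. Every facial triangle receives three distinct colors. Hence, for any partition of $\{1,2,3,4\}$ into two pairs, each face meets both pairs: a face missing one pair would use only the two colors of the other pair. Coloring the union of one pair red and the other blue is therefore polychromatic. This reduces the problem to showing that some pairing has
\[
  \bigl|(c_i+c_j)-(c_k+c_l)\bigr|\le n-2\ceil{n/3}.
\]

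We choose the pairing $\{1,4\}$ versus $\{2,3\}$. Its discrepancy is $|c_1+c_4-c_2-c_3|$, and we bound it in two cases.

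\emph{Case $c_1+c_4\ge c_2+c_3$.} The discrepancy equals $c_1-(c_2+c_3-c_4)$. Since $c_3\ge c_4$, this is at most $c_1-c_2\le c_1$. Separately, the discrepancy equals $2(c_1+c_4)-n$, and since $c_4\le c_1$ we have $c_1+c_4\le\ldots$. This second route is not directly useful, so a cleaner formulation is needed.

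\emph{Reformulation.} The discrepancy of any pairing is $n-2m$, where $m$ is the size of the smaller side. So it suffices to show that the larger side of the $\{1,4\}/\{2,3\}$ pairing has size at most $n-\ceil{n/3}$; equivalently, that the smaller side has at least $\ceil{n/3}$ vertices. Since sizes are integers, it is enough to show the smaller side has size at least $n/3$.

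\emph{Side $\{2,3\}$.} We have $c_2+c_3\ge \tfrac23(c_2+c_3+c_4)=\tfrac23(n-c_1)$. Because $c_1<n/2$, this gives $c_2+c_3>n/3$.

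\emph{Side $\{1,4\}$.} We have $c_1\ge n/4$, which alone is too weak. If $c_1+c_4<n/3$, then $c_2+c_3>2n/3$, so $c_2>n/3$ and hence $c_1>n/3$, contradicting $c_1+c_4<n/3$. Therefore $c_1+c_4\ge n/3$.

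Both sides thus have size at least $n/3$, hence at least $\ceil{n/3}$, so the discrepancy is at most $n-2\ceil{n/3}$. The second inequality $n-2\ceil{n/3}\le\floor{n/3}$ follows by checking $n$ modulo $3$.

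The only step that needs care is the choice of pairing together with the lower bound on its smaller side. Notably, the balance condition $c_1<n/2$ enters only in the bound for the side $\{2,3\}$.

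For the running time, we expect the cited balanced four-coloring to be computable in $O(n\log n)$ time. We would check~\cite{KawarabayashiEtAl2026} for the stated algorithmic version; this is the point most likely to require adjustment. After that, sorting the four classes and assigning the colors takes $O(n)$ time.
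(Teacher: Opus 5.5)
Your proof is correct and takes the same approach as the paper. You use the same balanced four-coloring, the same $\{1,4\}$ versus $\{2,3\}$ merge, and the same two estimates $c_1+c_4\ge n/3$ and $c_2+c_3\ge\frac23(n-c_1)>n/3$; the paper gets the first directly from $n\le 3(c_1+c_4)$ rather than by contradiction, the $O(n\log n)$ claim is exactly what Corollary~17 of Kawarabayashi--Yoneda--Yoneda provides, and your abandoned first ``Case'' paragraph can simply be deleted.
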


\begin{proof}
By Corollary~17 of Kawarabayashi, Yoneda, and
Yoneda~\cite{KawarabayashiEtAl2026}, the underlying planar graph of $T$
has a proper $4$-coloring whose color classes $V_1,V_2,V_3,V_4$ may be
indexed so that, with $n_i=|V_i|$,
\[
  n_1\ge n_2\ge n_3\ge n_4
  \quad\text{and}\quad
  n_1\le \ceil{\frac{n-2}{2}}
       =\floor{\frac{n-1}{2}}
       <\frac n2.
\]
Define
\[
  R=V_1\cup V_4,
  \qquad
  B=V_2\cup V_3,
\]
and put $r=|R|=n_1+n_4$.  Every facial triangle has three pairwise
adjacent vertices and therefore uses three distinct proper colors.  Each
of $R$ and $B$ comprises only two proper colors, so every face meets both
sets.  Thus the merged coloring is polychromatic.  This is the $2+2$
merging argument of \cite[Lemma~14]{LoyolaEtAl2026}.

It remains to check the balance.  Since $r\ge n_2,n_3$,
\[
  n=r+n_2+n_3\le3r,
  \qquad\text{so}\qquad
  r\ge\frac n3.
\]
Because $n_4$ is the smallest of $n_2,n_3,n_4$,
\[
  3n_4\le n_2+n_3+n_4=n-n_1.
\]
Consequently,
\[
  r=n_1+n_4
    \le n_1+\frac{n-n_1}{3}
    =\frac{n+2n_1}{3}
    <\frac{2n}{3}.
\]
Hence $n/3\le r<2n/3$.  Since both $r$ and $n-r$ are integers,
\[
  \min\{|R|,|B|\}
  =\min\{r,n-r\}
  \ge \ceil{\frac n3}.
\]
Therefore
\[
  \bigl||R|-|B|\bigr|
  =n-2\min\{|R|,|B|\}
  \le n-2\ceil{\frac n3}.
\]
For $n=3q,3q+1,3q+2$, the last expression is respectively
$q,q-1,q$, and is therefore at most $\floor{n/3}$.

The balanced proper $4$-coloring is computable in $O(n\log n)$ time by
\cite[Corollary~17]{KawarabayashiEtAl2026}; counting and merging its four
classes take linear additional time.
\end{proof}

For comparison, the two integer bounds are shown below.

\begin{table}[H]
\centering
\small
\begin{tabular}{c|cccccc}
\toprule
$n$ & $6k$ & $6k+1$ & $6k+2$ & $6k+3$ & $6k+4$ & $6k+5$ \\
\midrule
$B(n)$ & $2k$ & $2k-1$ & $2k$ & $2k+1$ & $2k$ & $2k+1$ \\
$U(n)$ & $2k-2$ & $2k-1$ & $2k-2$ & $2k-1$ & $2k$ & $2k-1$ \\
\bottomrule
\end{tabular}
\caption{The universal bound $B(n)$ and the refined target $U(n)$.  The last
column is not covered by Theorem~\ref{thm:refined}.}
\label{tab:bounds}
\end{table}

\begin{theorem}[Refined bound]\label{thm:refined}
Let $T$ be a plane triangulation on $n\ge6$ vertices.  If
$n\not\equiv5\pmod6$, then
\begin{equation}\label{eq:refined}
  \disc(T)\le n-2\ceil{\frac{n+2}{3}}.
\end{equation}
Moreover, a coloring satisfying this estimate can be found in $O(n\log n)$ time.
\end{theorem}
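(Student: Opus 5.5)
The plan is to keep the $2+2$ merging scheme from the proof of Theorem~\ref{thm:universal}, using all three pairings of the colour classes, and to repair the remaining near-extremal cases by recolouring a single vertex. Write $m=\ceil{(n+2)/3}$. The estimate~\eqref{eq:refined} is equivalent to finding a polychromatic colouring with $m\le |R|\le n-m$.

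For $n\equiv1\pmod3$ we have $m=\ceil{n/3}$, so Theorem~\ref{thm:universal} already gives the bound and nothing more is needed. For $n\equiv0,2\pmod3$ we have $m=\ceil{n/3}+1$, so we must gain exactly one vertex on the smaller side.

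\textbf{Step 1 (arithmetic with three pairings).} Start from the balanced proper $4$-colouring of \cite[Corollary~17]{KawarabayashiEtAl2026}, with $n_1\ge n_2\ge n_3\ge n_4$ and $n_1\le\floor{(n-1)/2}$. Consider the three merges
\[
R=V_1\cup V_j,\qquad j\in\{2,3,4\}.
\]
Each is polychromatic by the argument in the proof of Theorem~\ref{thm:universal}. I would show by a direct case analysis that one of the sums $n_1+n_2$, $n_1+n_3$, $n_1+n_4$ lies in $[m,n-m]$, unless the class sizes lie in a short explicit list of exceptional profiles. Heuristically, the three sums are spread around $n/2$ with gaps $n_2-n_3$ and $n_3-n_4$, and failure forces them to cluster near $n/3$ or $2n/3$. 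The exceptional profiles should be those with $n_1\approx n_2\approx n_3\approx n/3$ and $n_4\in\{0,1\}$, together with a few boundary profiles in which $n_1=\floor{(n-1)/2}$. Determining this list exactly is routine integer bookkeeping modulo $6$. I expect the hypothesis $n\not\equiv5\pmod6$ to enter precisely here: for $n=6k+5$ some profile with the wrong parity cannot be fixed by the move in Step~2.

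\textbf{Step 2 (single-vertex transfer).} In an exceptional profile, take the pairing with $R=V_1\cup V_4$, so that $|R|=\ceil{n/3}$ is the smaller side, and move one vertex $v\in B=V_2\cup V_3$ into $R$. The new colouring fails only at a face $vwx$ with $\{w,x\}$ coloured $\{1,4\}$. So $v$ is movable exactly when no face at $v$ sees both colours $1$ and $4$.
\begin{itemize}
\item If $n_4=0$, every face at $v$ has colours $\{1,3\}$ or $\{1,2\}$ on its other two vertices. Hence every vertex of $B$ is movable, and the bound follows.
\item If $n_4=1$ with $V_4=\{u\}$, the only obstructed vertices of $B$ are the neighbours of $u$ lying in faces through $u$ and a vertex of $V_1$. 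Since $|B|\ge 2n/3-1$ is large while the obstructions are confined to the link of $u$, I would instead pick $v\in B$ not adjacent to $u$. If every vertex of $B$ is adjacent to $u$, I would fall back to recolouring $u$ itself into one of the classes $1,2,3$, or to a Kempe-chain swap of colours $\{3,4\}$ at $u$; either reduces to the case $n_4=0$.
\end{itemize}
The boundary profiles with $n_1$ large would be handled symmetrically, by moving a vertex of $V_4$ out of $R$.

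\textbf{Main obstacle.} The hard part is the exact bookkeeping in Step~1: showing that the exceptional profiles are exactly those that a single admissible transfer can repair, and checking that this fails only for $n\equiv5\pmod6$. Within Step~2, the delicate point is guaranteeing a movable vertex when $V_4$ is small but nonempty. There I would first try the ``vertex not adjacent to $u$'' argument, and use the Kempe-chain swap only as a backup.

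\textbf{Running time.} All steps after computing the balanced $4$-colouring are linear, so the $O(n\log n)$ bound follows from \cite[Corollary~17]{KawarabayashiEtAl2026}.
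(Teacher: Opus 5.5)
Your repair of the three-colourable case is correct and is essentially the paper's. When $n_4=0$, moving any vertex of $V_2\cup V_3$ into $R=V_1$ keeps every face bichromatic, and this handles the profiles $(q,q,q,0)$ for $n=3q$ and $(q+1,q+1,q,0)$ for $n=3q+2$. The gap is Step~1. That is where all the remaining content of the theorem lies, and you leave it as ``routine bookkeeping'' with a guessed outcome.

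The missing idea is the one place the hypothesis $n\not\equiv5\pmod6$ enters. Take $n=3q+2$ with $q$ even. The balanced bound gives $n_1\le\ceil{(n-2)/2}=3q/2$. Now suppose $|V_2\cup V_3|=q+1$, equivalently $|V_1\cup V_4|=2q+1$. Then $n_4\ge q/2+1$, so $n_2+n_3\ge 2n_4\ge q+2$, a contradiction.

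Doing this arithmetic shows that, in the theorem's range, there are no exceptional profiles with $n_1$ at the balanced bound and none with $n_4=1$. The two three-colourable profiles above are the only exceptions. The extra pairings $V_1\cup V_2$ and $V_1\cup V_3$ gain nothing, because in every exceptional profile all three pairings fail together.

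In place of this computation, you propose to treat boundary profiles ``symmetrically, by moving a vertex of $V_4$ out of $R$.'' That move is unjustified. A vertex of $V_1\cup V_4$ moved into $B=V_2\cup V_3$ creates an all-blue face whenever it lies on a face whose other two vertices have colours $2$ and $3$. Nothing in your argument produces a vertex that avoids all such faces.

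In fact, for $n\ge6$ the only boundary-type exceptional profile is $(3k+2,k+1,k+1,k+1)$ at $n=6k+5$, where every pairing splits $4k+3$ versus $2k+2$. If your symmetric move always worked, your plan would also settle the residue class $n\equiv5\pmod6$, which the paper leaves open.

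Your $n_4=1$ fallback is likewise not a proof. The link of $u$ may use all of the colours $1,2,3$, so $u$ cannot always be recoloured. A Kempe swap changes the class sizes, so the count would have to be redone. This case turns out to be vacuous anyway.

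So the proof is completed not by a second recolouring move. It is completed by the parity computation above, which shows that the large-$R$ case cannot occur when $q$ is even.
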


\begin{proof}
Take a proper $4$-coloring supplied by
\cite[Corollary~17]{KawarabayashiEtAl2026}, and order its color classes
$V_1,V_2,V_3,V_4$ so that, writing
\[
  a=|V_1|\ge b=|V_2|\ge c=|V_3|\ge d=|V_4|,
\]
we have
\begin{equation}\label{eq:balanced-four-refined}
  a\le\ceil{\frac{n-2}{2}}<\frac n2.
\end{equation}
Set
\[
  R=V_1\cup V_4,
  \qquad
  B=V_2\cup V_3,
  \qquad
  r=a+d,
  \qquad
  s=b+c=n-r.
\]
As in the proof of Theorem~\ref{thm:universal}, this merge is
polychromatic.  Moreover, $r\ge b,c$ gives $r\ge n/3$, while
$d\le b,c$ and~\eqref{eq:balanced-four-refined} give
\[
  r=a+d\le\frac{n+2a}{3}<\frac{2n}{3}.
\]
Thus
\begin{equation}\label{eq:baseline-both}
  r,s\ge\ceil{\frac n3}.
\end{equation}
We show that the smaller class in fact has size at least
$\ceil{(n+2)/3}$.

Suppose first that $n=3q$.  By~\eqref{eq:baseline-both}, $r\ge q$,
and $r<2q$ implies $s\ge q+1$.  If $r\ge q+1$, we are done.  If
$r=q$, then $b,c\le r$ and $b+c=2q$, so $b=c=q$.  Since $a\ge b$
and $a+d=q$, we obtain $a=q$ and $d=0$.  Every face therefore contains
one vertex from each of $V_1,V_2,V_3$.  Color $V_2$ red, $V_3$ blue,
one vertex of $V_1$ red, and the remaining vertices of $V_1$ blue.  The
coloring is polychromatic and its two classes have sizes $q+1$ and
$2q-1$, both at least $q+1$ because $q\ge2$.  This direct split is a special case of the general result of Asayama and
Matsumoto that every properly $3$-colorable triangulation has a balanced
polychromatic $2$-coloring~\cite{AsayamaMatsumoto2022}.

If $n=3q+1$, then
\[
  \ceil{\frac n3}=q+1=\ceil{\frac{n+2}{3}},
\]
so Theorem~\ref{thm:universal} already gives the claim.

It remains to consider $n=3q+2$.  The assumption
$n\not\equiv5\pmod6$ says that $q$ is even, and $q\ge2$ because
$n\ge6$.  By~\eqref{eq:baseline-both}, $r,s\ge q+1$.

If $r=q+1$, then $b+c=2q+1$.  Since $b,c\le r$ and $b\ge c$, we get
$b=q+1$ and $c=q$.  The inequalities $a\ge b$ and $a+d=q+1$ then give
$a=q+1$ and $d=0$.  Again every face contains one vertex from each of
$V_1,V_2,V_3$.  Color $V_2$ red, $V_3$ blue, one vertex of $V_1$ red,
and the remaining vertices of $V_1$ blue.  The two classes have sizes
$q+2$ and $2q$, both at least $q+2$.

Finally, suppose that $s=q+1$, so $r=2q+1$.  Since $q$ is even,
\eqref{eq:balanced-four-refined} gives $a\le3q/2$, and hence
\[
  d=r-a\ge\frac q2+1.
\]
As $b,c\ge d$, it follows that
\[
  s=b+c\ge2d\ge q+2,
\]
a contradiction.  Thus neither merged class has size $q+1$, and both
have size at least
$q+2=\ceil{(n+2)/3}$.  This proves~\eqref{eq:refined}.

The construction consists of the $O(n\log n)$ balanced four-coloring
algorithm, class-size comparisons, the merge above, and, in the two
three-color cases, the choice of a single vertex.  Its total running time
is therefore $O(n\log n)$.
\end{proof}

\section{Exact computational verification}\label{sec:computation}

We performed an exhaustive census of all non-isomorphic sphere triangulations
with $4\le n\le12$.  The experiment is exact rather than random: every
triangulation in this range and every red--blue coloring, up to exchanging the
two colors globally, is checked.

\subsection{Method}

For each $n$, the program starts from one sphere triangulation and traverses
the graph of legal diagonal flips.  Wagner's theorem~\cite{Wagner1936}
ensures that this traversal reaches every triangulation of the same order.
Exact graph-isomorphism tests retain one representative of each isomorphism
class.  Since simple triangulations with at least four vertices are
$3$-connected, Whitney's theorem~\cite{Whitney1932} makes their sphere
embeddings unique up to reflection.  The resulting numbers of representatives
are
\[
  1,1,2,5,14,50,233,1249,7595
  \qquad (n=4,5,\ldots,12),
\]
in agreement with the standard census produced by \textsc{plantri}
\cite{BrinkmannMcKay2007}.

For each representative, fix one vertex $v_0$ and enumerate the red sets
$R\subseteq V(T)$ with $v_0\in R$.  This leaves one coloring from each
complementary pair and reduces the search to $2^{n-1}$ masks.  A mask is
accepted precisely when every facial triangle $f$ satisfies
\[
  0<|R\cap f|<3.
\]
Among the accepted masks, the program minimizes $|2|R|-n|$.  The accompanying
script \texttt{discrepancy\_census.py} implements the generation and the
optimization.

\subsection{Results}

\begin{table}[H]
\centering
\small
\setlength{\tabcolsep}{5pt}
\begin{tabular}{r r r r r r r r}
\toprule
$n$ & number & $\disc=0$ & $\disc=1$ & $\disc=2$
& maximum & $B(n)$ & $U(n)$ \\
\midrule
4  & 1    & 1    & 0    & 0  & 0 & 0 & -- \\
5  & 1    & 0    & 1    & 0  & 1 & 1 & -- \\
6  & 2    & 2    & 0    & 0  & 0 & 2 & 0 \\
7  & 5    & 0    & 5    & 0  & 1 & 1 & 1 \\
8  & 14   & 14   & 0    & 0  & 0 & 2 & 0 \\
9  & 50   & 0    & 50   & 0  & 1 & 3 & 1 \\
10 & 233  & 231  & 0    & 2  & 2 & 2 & 2 \\
11 & 1249 & 0    & 1249 & 0  & 1 & 3 & $1^{\ast}$ \\
12 & 7595 & 7579 & 0    & 16 & 2 & 4 & 2 \\
\bottomrule
\end{tabular}
\caption{Exact discrepancy distribution for all $9{,}150$ non-isomorphic
sphere triangulations with $4\le n\le12$.  At $n=11$, the starred value is a
computationally verified target, not a consequence of Theorem~\ref{thm:refined}.}
\label{tab:census}
\end{table}

Every triangulation in the census satisfies the universal bound, and every
order covered by Theorem~\ref{thm:refined} satisfies the refined bound.  The
refined bound is attained: exactly two triangulations on $10$ vertices and
sixteen on $12$ vertices have discrepancy $2$.  Overall, $9{,}132$ of the
$9{,}150$ triangulations attain the parity minimum, while the remaining
$18$ have discrepancy $2$.

At the first open order, $n=11$, all $1{,}249$ triangulations have discrepancy
$1=U(11)$.  This exhaustive finite verification is evidence for the same
estimate in the residue class $n\equiv5\pmod6$, but it does not prove it for
arbitrary $n$.  No general result beyond the universal bound of
Theorem~\ref{thm:universal} is claimed for that residue class.

\end{document}